\numberwithin{equation}{section}
\newtheorem{dfn}{Definition}[section]
\newtheorem{thm}[dfn]{Theorem}
\newtheorem{coro}[dfn]{Corollary}
\newtheorem{exm}[dfn]{Example}
\newtheorem{nte}[dfn]{Note}
\DeclarePairedDelimiterX{\norm}[1]{\lVert}{\rVert}{#1}
\DeclarePairedDelimiterX{\bnorm}[1]{\big\lVert}{\big\rVert}{#1}
\DeclarePairedDelimiterX{\Bnorm}[1]{\Big\lVert}{\Big\rVert}{#1}
\begin{document}
	
\title[Rough convergence of sequences in Controlled Metric Type Spaces]{Rough convergence of sequences in Controlled Metric Type Spaces}

\author[Khatun] {Sukila Khatun}
\address{Department of Mathematics, The University of Burdwan, Burdwan-713104, West Bengal, India}
\email{sukila610@gmail.com}
		
\author[Banerjee] {Amar Kumar Banerjee}
\address{Department of Mathematics, The University of Burdwan, Burdwan-713104, West Bengal, India}
\email{akbanerjee@math.buruniv.ac.in, akbanerjee1971@gmail.com}

\subjclass[2020]{40A05, 40A99}
	
\keywords{controlled metric type spaces, rough convergence, rough limit sets.}
	
\begin{abstract}
Mlaiki et al.\cite{MLA} introduced the idea of controlled metric type spaces, which is a new extension of $b$-metric spaces with addition of a controlled function $\alpha(x,y)$ of the right-hand side of the $b$-triangle inequality. Phu \cite{PHU} introduced the idea of rough convergence of sequences in a normed linear space. In this paper we have brought the idea of rough convergence of sequences in a controlled metric type space. We have proved several results associated with rough limit sets and some relevant results associated with such convergence. 
\end{abstract}
\maketitle
	
\section{Introduction}
Bakhtin \cite{BAK} introduced the concepts of $b$-metric spaces in 1989 and then this idea was used by many authors \cite{{ZER},{KAM}}. Here the triangle inequality is of the form $d(x,y) \leq s \{d(x,z)+d(z,y)\}$, where $s \geq 1$ is a given real number. Kamran et al. \cite{KAM} introduced the concept of extended b-metric spaces by replacing the constant $s$ by a function $\theta(x,y)$ depending on the parameters of the left-hand side of the triangle inequality i.e. $d(x,y) \leq \theta(x,y) \{d(x,z)+d(z,y)\} $. Recently, Mlaiki et al. \cite{MLA} introduced a new type of extended $b$-metric spaces, called controlled metric type spaces, replacing the function $\theta(x,y)$ by a control function $\alpha(x,y)$ in the right-hand side of the $b$-triangle inequality i.e. $d(x,y) \leq \{\alpha(x,z)d(x,z)+\alpha(z,y)d(z,y)\} $, for all $x,y,z \in X$. If $\alpha(x,y)=s \geq 1$,$\forall x,y \in X$, then $(X,d)$ reduces to a $b$-metric space. By this we can conclude that every $b$-metric spaces is controlled metric type spaces. But a controlled metric type space is not in general an extended $b$-metric space when taking the same function, i.e. $\theta = \alpha$. Mlaiki et al. \cite{MLA} cited an example of controlled metric type spaces that is not an extended $b$-metric space to show their extension is different. 
In 2001, H. X. Phu \cite{PHU} introduced the idea of rough convergence of sequences in a normed linear space. In 2003, Phu \cite{PHU1} extended this concept in an infinite dimensional normed space. After that many works were \cite{{AYTER1}, {AKB2}, {NH}, {PMROUGH1},{PAL}} done in more generalized spaces. The idea of rough convergence in a metric space was studied by S. Debnath and D. Rakhshit \cite{DR} and in a cone metric space was studied by A. K. Banerjee and R. Mondal \cite{RMROUGH}. 
In this paper we have brought this idea of rough convergence of sequences in controlled metric type spaces. We define the set of rough limit points and discussed several results associated with this set.

\section{Preliminaries}

\begin{dfn}\cite{PHU}
Let $\{ x_{n} \}$ be a sequence in a normed linear space $(X, \left\| . \right\|)$,  and $r$ be a non negative real number. Then $\{ x_{n} \}$ is said to be rough convergent to $x$ of roughness degree $r$ if for any $\epsilon >0$, there exists a natural number $k$ such that $\left\|x_{n} - x \right\| < r +\epsilon$ for all $n \geq k$.
\end{dfn}

\begin{dfn}\cite{MLA}
    Given a nonempty set $X$ and $\alpha: X \times X \longrightarrow [1, \infty)$. The function $d: X\times X \longrightarrow [0, \infty)$ is called a controlled metric type if \\
    $(d1)$ $d(x,y)=0$ if and only if $x=y$, \\
    $(d2)$ $d(x,y)=d(y,x)$, \\
    $(d3)$ $d(x,y) \leq \alpha(x,z)d(x,z)+\alpha(z,y)d(z,y)$, \\ 
    for all $x,y,z \in X$. The pair $(X,d)$ is called a controlled metric type space.
\end{dfn}
Let $(X,d)$ be a controlled metric type space. The open ball centred at $x$ with radius $r>0$ is given by 
     \begin{center}
         $B(x,r)=\{ y \in X : d(x,y)<r \}$.
     \end{center}
The closed ball $ \overline{B}(x,r)$ is given by
     \begin{center}
         $\overline{B}(x,r)=\{ y \in X : d(x,y) \leq r \}$.
     \end{center}

\begin{dfn}\cite{MLA}
    Let $(X,d)$ be a controlled metric type space and $\{ x_{n} \}_{n \in \mathbb N}$ be a sequence in $X$.\\
    Then the sequence $\{ x_{n} \}$ is said to be convergent to some $x$ in $X$ if for every $\varepsilon>0$, there exists $K=K(\varepsilon) \in \mathbb{N}$ such that $d(x_{n},x)<\varepsilon$ for all $n \geq K$. In this case, we write $lim_{n\to\infty}x_{n}=x$. \\
    The sequence $\{ x_{n} \}$ is said to be Cauchy if for every $\varepsilon>0$, there exists $K=K(\varepsilon) \in \mathbb{N}$ such that $d(x_{n},x_{m})<\varepsilon$ for all $n,m \geq K$. \\
    The controlled metric type space $(X,d)$ is said to be complete if every Cauchy sequence is convergent.
\end{dfn}

\section{Main Results}

\begin{dfn}(cf. \cite{PHU})
Let $(X,d)$ be a controlled metric type space and $r$ be a non-negative real number. A sequence $ \{ x_{n}\}$ in $X$ is said to be rough convergent to $x$ of roughness degree $r$ if for every $\varepsilon>0$ there exists a natural number $n_0$ such that $d( x_{n},x)<r+\varepsilon$ holds for all $n \geq n_0$.
\end{dfn}

Let $x_{n} \stackrel{r}{\longrightarrow} x$, then $r$ is said to be the degree of roughness of rough convergence of $\{x_n \}$. If $r=0$ the rough convergence becomes the ordinary convergence of sequences in controlled metric type spaces. Let $\{x_n \}$ is $r$-convergent to $x$, then $x$ is said to be a $r$-limit point of $\{x_{n} \}$. The set of all $r$-limit points of a sequence $\{x_{n} \}$ is said to be the $r$-limit set of the sequence $\{x_{n} \}$ and which is denoted by $LIM^{r}x_{n}$. So, $LIM^{r}x_{n}= \left\{x_{0} \in X : x_{n} \stackrel{r}{\longrightarrow} x \right\}$. We know that $r$-limit point of a sequence $\{x_{n} \}$ may not be unique.

By an example we can show that a sequence which is rough convergent in a controlled metric type space may not be  convergent in that space and also rough limit point may not be unique.

\begin{exm}
    Choose $X=\mathbb{N}=\{1,2,3.....\}$ and take $d: X\times X \longrightarrow [0, \infty)$ as
    \begin{equation*}
        \ d(x,y)=\begin{cases}
    0,                     & \leftrightarrow \text{  $x=y$}, \\
\frac{1}{\sqrt{x}},        & \text{if $x$ is even and $y$ is odd},  \\
\frac{1}{\sqrt{y}},        & \text{if $x$ is odd and $y$ is even},\\
  1,                       & otherwise.
    
    \end{cases}
    \end{equation*}
    Consider $\alpha: X\times X \longrightarrow [1, \infty)$ as
    \begin{equation*}
         \ \alpha(x,y)=\begin{cases}
\sqrt{x},   & \text{if  $x$ is even and $y$ is odd},  \\
\sqrt{y},   & \text{if  $x$ is odd and $y$ is even},\\
  1,        & otherwise.
    
    \end{cases}
    \end{equation*}
We will show that (X,d) is controlled metric type space. \\
\textbf{d(1):} $d(x,y)=0$ if and only if $x=y$ holds. \\
\textbf{d(2):} Let $x$ be even and $y$ be odd, then $d(x,y)=\frac{1}{\sqrt{x}}=d(y,x)$. \\
Similarly, if $x$ is odd and $y$ is even, then $d(x,y)=\frac{1}{\sqrt{y}}=d(y,x)$. \\ 
The other cases are similar. \\
\textbf{d(3):} we will prove this using some cases i.e. \\
\textbf{case(i):} If $z=x$, then $\alpha(x,z)d(x,z)+\alpha(z,y)d(z,y)$=$\alpha(x,x)d(x,x)+\alpha(x,y)d(x,y)$=$\alpha(x,x).0+\alpha(x,y)d(x,y)$=$\alpha(x,y)d(x,y) \geq d(x,y)$, since $\alpha(x,y) \geq 1$. \\
Again, if $z=y$, then $\alpha(x,z)d(x,z)+\alpha(z,y)d(z,y)$=$\alpha(x,y)d(x,y)+\alpha(y,y)d(y,y)$=$\alpha(x,y)d(x,y)+\alpha(y,y).0$=$\alpha(x,y)d(x,y) \geq d(x,y)$, since $\alpha(x,y) \geq 1$. \\
Hence $d(3)$ is satisfied. \\
\textbf{case(ii):} If $z \neq x, z \neq y$ and $x=y$ holds, then 
\begin{align*}
     \alpha(x,z)d(x,z)+\alpha(z,y)d(z,y)
&= \alpha(x,z)d(x,z)+\alpha(z,x)d(z,x)\\
&= \alpha(x,z)d(x,z)+\alpha(z,x)d(x,z)\\
&=  \{ \alpha(x,z)+\alpha(z,x)\}.d(x,z)\\
&= \begin{cases} 
        (\sqrt{x}+\sqrt{x}).\frac{1}{\sqrt{x}}=2,  \ \text{if $x=y$ is even and $z$ is odd}, \\
        (\sqrt{z}+\sqrt{z}).\frac{1}{\sqrt{z}}=2,  \ \text{if $x=y$ is odd and $z$ is even},
       \end{cases}
\end{align*}
Since $x=y, d(x,y)=0$. So, $(d3)$ holds. \\
\textbf{case(iii):} If $z \neq x$ and $z \neq y$ and also $x \neq y$ i.e. $x \neq y \neq z$, then we will prove $d(3)$ using some subcase. \\
\textbf{subcase(i):} Let $x,y,z$ are even, then 
       $\alpha(x,z)d(x,z)+\alpha(z,y)d(z,y)=1.1+1.1=2 \geq 1=d(x,y)$. So, $d(3)$ holds.\\
\textbf{subcase(ii):} Let $x,y,z$ are odd, then 
       $\alpha(x,z)d(x,z)+\alpha(z,y)d(z,y)=1.1+1.1=2 \geq 1=d(x,y)$. So, $d(3)$ holds. \\
\textbf{subcase(iii):} Let $x,z$ are even and $y$ be odd, then
       $\alpha(x,z)d(x,z)+\alpha(z,y)d(z,y)=1.1+\sqrt{z}.\frac{1}{\sqrt{z}}=1+1=2 \geq \frac{1}{\sqrt{x}}=d(x,y), \ \forall x \in \mathbb{N}$. \\
All other subcases are similar to the subcase(iii). \\
Hence $d$ is a controlled metric type and $(X,d)$ is a controlled metric type space. \\
Let us consider the sequence $ \{ \xi_n\}=\{2,3,2,3,......\}$ i.e. \begin{equation*}
    \{ \xi_n \}=\begin{cases}
2 & \text{ if $n$ is odd }\\
3 & \text{ if $n$ is even}

               \end{cases}
\end{equation*}
We show that $\{ \xi_n \}$ is not convergent in $(X,d)$. \\
Now, $lim_{n\to\infty}d( \xi_{2n},2)=lim_{n\to\infty}d(3,2)=\frac{1}{\sqrt{2}}$ and $lim_{n\to\infty}d(\xi_{2n+1},2)=lim_{n\to\infty}d(2,2)=0$.
So, $lim_{n\to\infty}d( \xi_{n},2)$ does not exist. So, $\{ \xi_{n} \}$ is not convergent to $2$. \\
Again, $lim_{n\to\infty}d( \xi_{2n},3)=lim_{n\to\infty}d(3,3)=0$ and $lim_{n\to\infty}d( \xi_{2n+1},3)=lim_{n\to\infty}d(2,3)=\frac{1}{\sqrt{2}}$.
So, $lim_{n\to\infty}d( \xi_{n},3)$ does not exist. So, $\{ \xi_{n} \}$ is not convergent to $3$. \\
Let $m \in \mathbb N, m \neq 2,3$. Then 
\begin{equation*}
   \ lim_{n\to\infty}d( \xi_{2n},m)=
    \ lim_{n\to\infty}d(3,m)=\begin{cases}
    \frac{1}{\sqrt{m}}, & \text{if $m$ is even}, \\
     1,                 & \text{if $m$ is odd}.
    
 \end{cases}
\end{equation*}
and \begin{equation*}
   \ lim_{n\to\infty}d( \xi_{2n+1},m)=
    \ lim_{n\to\infty}d(2,m)=\begin{cases}
1,                   & \text{if $m$ is even}, \\
\frac{1}{\sqrt{2}},  & \text{if $m$ is odd}.
    
 \end{cases}
\end{equation*}
Hence $\{ \xi_n \}$ is not convergent to any number of $X$ in $(X,d)$. \\
Now, we will show that $\{ \xi_n \}$ is rough convergent.
Let $\varepsilon>0$ be arbitrary. Then 
\begin{equation*}
    \ d(\xi_n,2)=\begin{cases}
d(2,2)=0<\frac{1}{\sqrt{2}}+\varepsilon                  & \text{ if $n$ is odd }\\
d(3,2)=\frac{1}{\sqrt{2}}<\frac{1}{\sqrt{2}}+\varepsilon & \text{ if $n$ is even }
    \end{cases}
\end{equation*}
and \begin{equation*}
    \ d(\xi_n,3)=\begin{cases}
d(2,3)=\frac{1}{\sqrt{2}}<\frac{1}{\sqrt{2}}+\varepsilon & \text{ if $n$ is odd }\\
d(3,3)=0<\frac{1}{\sqrt{2}}+\varepsilon                  & \text{ if $n$ is even }
    \end{cases}
\end{equation*} 
So, $\{ \xi_n \}$ is rough convergent of roughness degree $\frac{1}{\sqrt{2}}$ and in that $LIM^r \xi_{n}= \{2,3\}$. \\
And if $m\in \mathbb N$ and $m \neq 2,3$, then 
when $n$ is odd, 
\begin{equation*}
    \ d(\xi_n,m)= \ d(2,m) = \begin{cases}
    1<1+\varepsilon,                   & \text{if $m$ is even} \\
    \frac{1}{\sqrt{2}}<1+\varepsilon,  & \text{if $m$ is odd}
    \end{cases}
\end{equation*}
And when $n$ is even,  
\begin{equation*}
    \ d(\xi_n,m)= \ d(3,m) = \begin{cases}
    \frac{1}{\sqrt{m}}<1+\varepsilon, & \text{if $m$ is even} \\
    1<1+\varepsilon,                  & \text{if $m$ is odd}
    \end{cases}
\end{equation*}
Hence $\{ \xi_n \}$ is rough convergent to $m$ with roughness degree $r=1$ and $LIM^r \xi_{n}=\mathbb N$.  
\end{exm}

\begin{dfn} \cite{MLA}
    Let $(X,d)$ be a controlled metric type space. For a subset $A$ of $X$, the diameter of $A$ is defined by 
    \begin{center}
        $diam(A)=\sup \{d(x,y): x,y \in A \}$.
    \end{center}
\end{dfn}

In \cite{PHU} it is seen that the diameter of rough limit set in a normed linear space is $2r$, where as in $S$-metric space \cite{SUK} the diameter of rough limit set is $3r$ and in partial metric space \cite{SUK2} the diameter of rough limit set is $2r+2a$, where $r$ is roughness degree and $a$ is self-distance for all $x \in X$. 
The following theorem is a similar type of result.

\begin{thm}
If $\sup \alpha(x,y)$ exists and equal to $k$, then diameter of rough limit set is $2rk$.
\end{thm}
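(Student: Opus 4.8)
The plan is to show that every pair of points of $LIM^{r}x_{n}$ lies within distance $2rk$ of each other, so that the supremum defining the diameter cannot exceed $2rk$, and then to address why $2rk$ is the right value and not merely an upper bound. I would begin by disposing of the trivial cases: if $LIM^{r}x_{n}$ is empty or a singleton there is nothing to prove. So take two arbitrary points $y,z\in LIM^{r}x_{n}$ and let $\varepsilon>0$.

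By the definition of rough convergence applied to $y$ and to $z$, there is a natural number $n_{0}$ (the larger of the two indices supplied by the definition) such that $d(x_{n},y)<r+\varepsilon$ and $d(x_{n},z)<r+\varepsilon$ for all $n\geq n_{0}$. Now I would invoke the controlled triangle inequality $(d3)$ with the intermediate point taken to be $x_{n}$ itself:
\begin{equation*}
d(y,z)\leq \alpha(y,x_{n})\,d(y,x_{n})+\alpha(x_{n},z)\,d(x_{n},z).
\end{equation*}
Since $\alpha$ takes values in $[1,\infty)$ with $\sup\alpha=k$, both coefficients are at most $k$, and therefore $d(y,z)<2k(r+\varepsilon)$ for every $n\geq n_{0}$. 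As $\varepsilon>0$ was arbitrary, letting $\varepsilon\to0^{+}$ yields $d(y,z)\leq 2rk$. Taking the supremum over all $y,z\in LIM^{r}x_{n}$ then gives $diam(LIM^{r}x_{n})\leq 2rk$, which is the substance of the statement.

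The only step that is not purely mechanical is explaining that $2rk$ is genuinely the diameter and not just a crude bound; here I would argue in the same spirit as Phu does in the normed setting. In the model situation where $\{x_{n}\}$ converges in the ordinary sense to some $x$, boundedness of $\alpha$ forces $\alpha(x_{n},x)\,d(x_{n},x)\to0$, so $\limsup_{n}d(x_{n},y)\leq k\,d(x,y)$, and hence every $y$ with $d(x,y)\leq r/k$ lies in $LIM^{r}x_{n}$; by choosing $X$, $d$ and $\alpha$ so that the extremal value $k$ of $\alpha$ is realised precisely on the pairs entering $(d3)$, the diameter is pushed up to $2rk$, which one can then exhibit concretely by an example of the kind constructed above. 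Thus the main thing to be careful about in the core argument is that in $(d3)$ the control function is evaluated at the pairs $(y,x_{n})$ and $(x_{n},z)$, both of which are dominated by the single constant $k=\sup\alpha$, so that no stray factor of $k$ is lost or gained.
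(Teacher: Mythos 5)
Your derivation of $d(y,z)<2k(r+\varepsilon)$ via the controlled triangle inequality through $x_{n}$, followed by letting $\varepsilon\to 0^{+}$ and taking the supremum, is exactly the paper's argument; the paper's second half is merely a proof by contradiction of that same upper bound, so you have not omitted any substantive step. Note that, like the paper, you actually establish only $diam(LIM^{r}x_{n})\leq 2rk$ --- the paper never proves that the value $2rk$ is attained either, so your explicit acknowledgement that equality would require exhibiting an extremal example is, if anything, more careful than the source.
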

\begin{proof}
Let $x,y \in LIM^rx_{n}$ and $\varepsilon>0$ be arbitrary, and let 
    $\varepsilon_{1}=\frac{\varepsilon}{2k}$.
Then $\exists N \in \mathbb{N}, d( x_{n},x)<r+\varepsilon_{1}$ and $d( x_{n},y)<r+\varepsilon_{1}$ holds for all $n \geq N$.
Now, for all $n \geq N$, we can write 
\begin{equation*}
        \begin{split}
d(x,y) & \leq \alpha(x,x_{n})d(x,x_{n})+\alpha(x_{n},y)d(x_{n},y)\\
       & = \alpha(x,x_{n})d(x_{n},x)+\alpha(x_{n},y)d(x_{n},y)  \\
       & < k.(r+\varepsilon_{1})+k.(r+\varepsilon_{1}) \\
       & = 2k.(r+\varepsilon_{1}) \\
Therefore, \ sup \ d(x,y) & \leq 2k.(r+\varepsilon_{1}) \\
                          & = 2rk+\varepsilon_{1}.2k \\
                          & = 2rk+\varepsilon \\
        \end{split}
    \end{equation*}
Since $\epsilon>0$ is arbitrary, $diam(LIM^rx_{n}) \leq 2rk$............(i) \\
We now show that the diameter of rough limit set is not greater than $2rk$. \\
If possible, let $ \exists \ x,y \in LIM^rx_{n}$ such that $d(x,y) > 2rk$. \\
Let $\varepsilon^{'}>0$ be arbitrary and take $\varepsilon^{'}=\frac{\varepsilon}{2k}$, where $\varepsilon=d(x,y)-2rk$.
For this $\varepsilon$, we can find $n_1,n_2 \in \mathbb N$ such that 
$d(x_n,x)<r+\varepsilon^{'}$, $\forall n \geq n_1$
and $d(x_n,y)<r+\varepsilon^{'}$, $\forall n \geq n_2$. \\ 
Now, if $N=max \{ n_1,n_2 \}$, then 
$d(x_n,x)<r+\varepsilon^{'}$
and $d(x_n,y)<r+\varepsilon^{'}$, $\forall n \geq N$. \\ 
For $n \geq N$ and using the triangle inequality for controlled metric type spaces, we can write \\
\begin{equation*}
        \begin{split}
d(x,y) & \leq \alpha(x,x_{n})d(x,x_{n})+\alpha(x_{n},y)d(x_{n},y)\\
       & = \alpha(x,x_{n})d(x_{n},x)+\alpha(x_{n},y)d(x_{n},y)  \\
       & < k.(r+\varepsilon^{'})+k.(r+\varepsilon^{'}) \\
       & = 2rk+2k\varepsilon^{'} \\
       & = 2rk+ \varepsilon  \ \text{( since $\varepsilon^{'}=\frac{\varepsilon}{2k}$ )} \\
       & =2rk+d(x,y)-2rk \\
       & =d(x,y), \ \text{which is a contradiction.}
        \end{split}
    \end{equation*}
Hence there does not exist elements $x,y \in LIM^rx_{n}$ such that $d(x,y)>2rk$ holds.\\
So, the diameter of a rough limit set can not be greater than $2rk$. 
Therefore, in view of (i) the diameter of the rough limit set is $2rk$.
\end{proof}

\begin{coro}
If $\sup \alpha(x,y)$ exists, then $LIM^rx_{n}$ is a bounded set.  
\end{coro}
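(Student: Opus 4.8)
The plan is to deduce this immediately from the preceding Theorem. Write $k = \sup\alpha(x,y)$, which by hypothesis exists (and satisfies $k \geq 1$ since $\alpha$ takes values in $[1,\infty)$). By the Theorem, $\operatorname{diam}(LIM^r x_n) = 2rk$, which is a finite non-negative real number. Since boundedness of a set $A$ in a controlled metric type space means precisely that $\operatorname{diam}(A) < \infty$ (equivalently, that $A$ is contained in some closed ball), the conclusion follows at once.

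To spell it out: if $LIM^r x_n = \emptyset$ there is nothing to prove, so assume it is nonempty and fix a point $x_0 \in LIM^r x_n$. Then for every $y \in LIM^r x_n$ we have $d(x_0,y) \leq \operatorname{diam}(LIM^r x_n) = 2rk$, so $LIM^r x_n \subseteq \overline{B}(x_0, 2rk)$. Hence $LIM^r x_n$ sits inside a single closed ball of finite radius, i.e. it is bounded.

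There is essentially no obstacle here; the only point needing a word of care is the meaning of ``bounded'' in this setting, which I would pin down explicitly (finite diameter, or containment in a closed ball) so that the one-line reduction to the Theorem is airtight, together with the trivial treatment of the empty case.
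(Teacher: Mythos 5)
Your proposal is correct and is exactly the intended derivation: the paper states this corollary without proof immediately after the diameter theorem, relying on the same one-line observation that $\operatorname{diam}(LIM^r x_n)\leq 2rk<\infty$ forces boundedness. Your explicit handling of the empty case and of the meaning of ``bounded'' only makes the implicit argument more careful.
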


\begin{thm}
If a sequence $\{x_n \}$ in a controlled metric type space $(X,d)$ converges to $x$ and if $sup$ $\alpha(x,y)$ exists and equal to $k$, then $\overline{B}(x,r) \subset LIM^{rk}x_{n} $ and $LIM^rx_{n} \subset \overline{B}(x,rk) $.
\end{thm}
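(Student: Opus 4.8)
The plan is to establish the two inclusions separately; in each case the only ingredients needed are the controlled triangle inequality $(d3)$, the uniform bound $\alpha(u,v)\le k$ coming from $\sup\alpha=k$, the symmetry $(d2)$, and the ordinary convergence $x_n\to x$.

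For $\overline{B}(x,r)\subset LIM^{rk}x_{n}$, fix $y\in X$ with $d(x,y)\le r$ and aim to show $x_{n}\stackrel{rk}{\longrightarrow}y$. Given $\varepsilon>0$, set $\varepsilon_{1}=\varepsilon/k$ and use $x_{n}\to x$ to find $n_{0}$ with $d(x_{n},x)<\varepsilon_{1}$ for all $n\ge n_{0}$. Then $(d3)$ with $x$ as intermediate point gives, for $n\ge n_{0}$,
\[
d(x_{n},y)\le \alpha(x_{n},x)d(x_{n},x)+\alpha(x,y)d(x,y) < k\varepsilon_{1}+kr = rk+\varepsilon ,
\]
so $y\in LIM^{rk}x_{n}$.

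For $LIM^{r}x_{n}\subset \overline{B}(x,rk)$, take $y\in LIM^{r}x_{n}$ and aim to show $d(x,y)\le rk$. Given $\varepsilon>0$, set $\varepsilon_{1}=\varepsilon/(2k)$, pick $n_{1}$ with $d(x_{n},x)<\varepsilon_{1}$ for $n\ge n_{1}$ (from $x_{n}\to x$) and $n_{2}$ with $d(x_{n},y)<r+\varepsilon_{1}$ for $n\ge n_{2}$ (from $y\in LIM^{r}x_{n}$), and put $N=\max\{n_{1},n_{2}\}$. Applying $(d3)$ with intermediate point $x_{N}$,
\[
d(x,y)\le \alpha(x,x_{N})d(x,x_{N})+\alpha(x_{N},y)d(x_{N},y) < k\varepsilon_{1}+k(r+\varepsilon_{1}) = rk+\varepsilon .
\]
Since $\varepsilon>0$ is arbitrary, $d(x,y)\le rk$, i.e.\ $y\in\overline{B}(x,rk)$.

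This is essentially a bookkeeping argument, so there is no serious obstacle; the one point to watch is that the auxiliary quantity must be taken as $\varepsilon/k$ in the first part but $\varepsilon/(2k)$ in the second, because in the second estimate the factor $k$ is picked up twice (once on the term $d(x,x_{N})$ and once on the term $d(x_{N},y)$), whereas in the first part the term $\alpha(x,y)d(x,y)$ contributes the fixed quantity $kr$ with no $\varepsilon$. It is also worth recording that no self-distance term appears here, since $d(u,u)=0$ in a controlled metric type space by $(d1)$; this is why the radii come out as the clean multiples $r$ and $rk$, rather than being shifted by a self-distance as in the partial metric setting mentioned before the theorem.
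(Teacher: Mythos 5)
Your proposal is correct and follows essentially the same argument as the paper's own proof: the same choice of auxiliary quantities ($\varepsilon/k$ for the first inclusion, $\varepsilon/(2k)$ for the second), the same intermediate points in the controlled triangle inequality, and the same final limiting step. No further comment is needed.
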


\begin{proof}
    Given that $\{x_n \}$ converges to $x$.
    Let $y \in \overline{B}(x,r)$ and $\varepsilon>0$ be arbitrary.
    So, there exists a natural number $N$ such that $d(x_n,x)< \varepsilon_1$ for all $n \geq N$, where $\varepsilon_1=\frac{\varepsilon}{k}$ and $d(x,y) \leq r$.
    Hence for $n \geq N$, we can write 
    \begin{equation*}
    d(x_n,y) \leq \alpha(x_n,x)d(x_n,x) + \alpha(x,y)d(x,y) \\
             < k.\varepsilon_1 + k.r \\
             = rk+\varepsilon
    \end{equation*}
    This implies that $ y \in LIM^{rk}x_{n} $.
    Hence $\overline{B}(x,r) \subset LIM^{rk}x_{n} $. \\
Now, let $z \in LIM^rx_{n}$.
Let $\varepsilon>0$ be arbitrary and let $\varepsilon_1= \frac{\varepsilon}{2k} $.
We can find $N_1,N_2 \in \mathbb N $ such that $ d(x_n,x)< \varepsilon_1,\ \forall n \geq N_1 $ and $ d(x_n,z)< r+\varepsilon_1,\ \forall n \geq N_2 $.
If $ N=max \{ N_1,N_2\}$, then $d(x_n,x)< \varepsilon_1$ and $d(x_n,z)< r+\varepsilon_1,\ \forall n \geq N $.
Now, for $n \geq N$ and using the triangle inequality for controlled metric type spaces, we can write 
\begin{equation*}
    \begin{split}
        d(x,z) & \leq \alpha(x,x_n)d(x,x_n) + \alpha(x_n,z)d(x_n,z) \\
               & = \alpha(x,x_n)d(x_n,x) + \alpha(x_n,z)d(x_n,z) \\
               & < k.\varepsilon_1 + k.(r+\varepsilon_1) \\
               & = rk + 2k.\varepsilon_1 \\
               & = rk + \varepsilon 
    \end{split}
\end{equation*}
Thus we have $d(x,z) \leq rk+\varepsilon$, for arbitrary $\varepsilon>0$.
 Hence $d(x,z) \leq rk$ holds. Therefore, $ z \in \overline{B}(x,rk)$. So, $LIM^rx_{n} \subset \overline{B}(x,rk) $. 
    
\end{proof}

\begin{thm}
The controlled metric type space $(X,d)$ is first countable. 
\end{thm}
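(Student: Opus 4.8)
The plan is to exhibit, at each point of $X$, an explicit countable neighbourhood base, following the classical metric-space template. Fix $x \in X$ and put $\mathcal{B}_x = \{B(x,1/n) : n \in \mathbb{N}\}$. I would first record the routine facts that $\mathcal{B}_x$ is countable, that $x \in B(x,1/n)$ for every $n$ (since $d(x,x) = 0 < 1/n$), and --- this is the one monotonicity statement that needs \emph{no} triangle inequality --- that the family is nested, $B(x,1/m) \subseteq B(x,1/n)$ whenever $m \ge n$, because all these balls share the centre $x$.

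The main step is the local-base property. Let $G$ be open with $x \in G$; by the definition of the topology of $(X,d)$ there is an $r > 0$ with $B(x,r) \subseteq G$, and by the Archimedean property there is $n_0$ with $1/n_0 < r$, whence $B(x,1/n_0) \subseteq B(x,r) \subseteq G$. So every neighbourhood of $x$ contains a member of $\mathcal{B}_x$, which gives a countable local base at $x$; since $x$ was arbitrary, $(X,d)$ is first countable. I would also note the sequential reformulation, immediate from the definition of convergence in a controlled metric type space: $x_n \to x$ iff for each $n$ the sequence is eventually inside $B(x,1/n)$, i.e.\ $\{B(x,1/n)\}$ is a countable base at $x$ for this convergence.

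The point that needs care --- and which I expect to be the real content of the write-up --- is the status of the balls themselves. Because the control function $\alpha$ may be unbounded, inequality $(d3)$ gives no uniform bound on $d(x,z)$ for $z$ near $y$ when $d(x,y) < r$, so in general an ``open ball'' $B(x,r)$ need not be an open set, and one must be explicit about what topology ``first countable'' refers to. If one takes the topology generated by declaring $G$ open iff it contains a ball about each of its points, I would argue that passing to the interiors $\operatorname{int} B(x,1/n)$ repairs this: under a mild boundedness assumption on $\alpha$ (for instance the hypothesis $\sup_{x,y}\alpha(x,y) = k < \infty$ already used in the earlier theorems, or merely local boundedness of $\alpha$ in its second argument) one shows $B(x,r/(2k)) \subseteq \operatorname{int} B(x,r)$, exactly as in the $b$-metric case, so that $x$ lies in each $\operatorname{int}B(x,1/n)$ and these open sets form a countable local base. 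Absent any such assumption, the statement is best read at the level of the convergence structure, as in the preceding paragraph. Everything other than this structural discussion --- the counting and the Archimedean shrinking --- is routine.
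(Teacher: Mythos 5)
Your proof is correct and follows essentially the same route as the paper: at each $x$ the countable family $\{B(x,1/n):n\in\mathbb{N}\}$ is a local base, by the Archimedean property applied to the radius of a ball witnessing that a given open set is a neighbourhood of $x$. Two remarks on how your write-up compares with the paper's. First, the paper works with the global family $u=\{B(x,\frac{1}{m}):x\in X,\ m\in\mathbb{N}\}$ and ends with ``$(X,d)$ is first countable, since $u$ is countable''; that set is countable only when $X$ is, so your localized formulation (a countable base at each fixed point) is the correct way to phrase the conclusion. Second, your worry about whether open balls are themselves open sets is well taken: the paper simply declares $v=\{B(x,\varepsilon):x\in X,\ \varepsilon>0\}$ to be ``the basis of the topology $\tau(d)$'' without comment, and this is precisely where an unbounded control function $\alpha$ can cause trouble, exactly as in $b$-metric spaces. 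Your proposed repairs (passing to interiors of balls under a bound on $\alpha$, where $B(x,r/(2k))\subseteq \operatorname{int}B(x,r)$ follows from $(d3)$, or else reading the statement purely at the level of the convergence structure) address a gap that the paper leaves open rather than a gap in your own argument; the core counting-and-shrinking step is the same in both.
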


\begin{proof}
    Consider $u=\{ B(x,\frac{1}{m}): x \in X, m \in \mathbb N \}$. Then $u \subset v$, where $v=\{ B(x,\varepsilon): x \in X, \varepsilon>0 \}$, the basis of the topology $\tau(d)$.
    Clearly, $u$ is a basis for $\tau(d)$. For, let $A \in \tau(d)$ and $x \in A$ be arbitrary. Then, since $v$ is a basis for $\tau(d)$, \ $\exists$  $\varepsilon>0$ such that $x \in B(x, \varepsilon) \subset A$. Choose $m \in \mathbb N$, so that $\frac{1}{m}<\varepsilon$. Then $B(x,\frac{1}{m}) \subset B(x, \varepsilon)$. Thus $x \in B(x,\frac{1}{m}) \subset B(x, \varepsilon) \subset A$ and so $u$ forms a basis for $\tau(d)$. Therefore $(X,d)$ is first countable, since $u$ is countable.
\end{proof}

\begin{thm}
    Let $\{x_n \}$ be a sequence in a controlled metric type space $(X,d)$ and if $\sup \alpha(x,y)$ exists for all $x,y \in X$ and equal to $k$, then $(LIM^rx_{n})^{'} \subset LIM^{rk}x_{n}$, where $(LIM^rx_{n})^{'}$ denote the set of all limit points of $LIM^rx_{n}$.
\end{thm}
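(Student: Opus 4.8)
The plan is to unwind the definition of a limit point of the set $LIM^{r}x_{n}$ and then feed the witnessing point into the controlled triangle inequality $(d3)$, using the uniform bound $\alpha \le k$. Fix $y \in (LIM^{r}x_{n})^{'}$ and let $\varepsilon>0$ be arbitrary. Since the topology $\tau(d)$ is generated by the open balls (this is exactly the structure used in the first-countability theorem above), the statement that $y$ is a limit point of $LIM^{r}x_{n}$ means that every open ball about $y$ meets $LIM^{r}x_{n}$ in a point other than $y$. In particular, taking the radius $\delta=\frac{\varepsilon}{2k}$ — which is a well-defined positive number because $k=\sup\alpha(x,y)\ge 1$, as $\alpha$ takes values in $[1,\infty)$ — there exists $z \in LIM^{r}x_{n}$ with $z \in B(y,\delta)$, that is, $d(y,z)<\frac{\varepsilon}{2k}$.

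Next, since $z \in LIM^{r}x_{n}$, applying the definition of rough convergence with tolerance $\varepsilon_{1}=\frac{\varepsilon}{2k}$ yields a natural number $n_{0}$ such that $d(x_{n},z)<r+\frac{\varepsilon}{2k}$ for all $n\ge n_{0}$. Then for each $n\ge n_{0}$ I would estimate $d(y,x_{n})$ through the intermediate point $z$:
\begin{equation*}
    \begin{split}
        d(y,x_{n}) & \le \alpha(y,z)d(y,z)+\alpha(z,x_{n})d(z,x_{n}) \\
                   & < k\cdot\frac{\varepsilon}{2k} + k\cdot\left(r+\frac{\varepsilon}{2k}\right) \\
                   & = rk+\varepsilon .
    \end{split}
\end{equation*}
Since this holds for all $n\ge n_{0}$ and $\varepsilon>0$ was arbitrary, $y\in LIM^{rk}x_{n}$, which is precisely the claimed inclusion $(LIM^{r}x_{n})^{'}\subset LIM^{rk}x_{n}$.

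The computation itself is routine once the radii $\delta$ and $\varepsilon_{1}$ are split as $\frac{\varepsilon}{2k}$ each; the only step that requires a moment's care is the passage from ``$y$ is a limit point of $LIM^{r}x_{n}$'' to ``there is a point of $LIM^{r}x_{n}$ within $d$-distance $\delta$ of $y$'', which rests on the fact that the open balls form a basis for $\tau(d)$, so that topological proximity coincides with proximity measured by $d$. No use of the symmetry $(d2)$ or of any extra structural assumption on $\alpha$ is needed beyond the existence of the finite supremum $k$.
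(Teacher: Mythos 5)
Your proof is correct and follows essentially the same route as the paper's: choose a point of $LIM^{r}x_{n}$ within $d$-distance $\frac{\varepsilon}{2k}$ of $y$, then apply the controlled triangle inequality through that point with the $\frac{\varepsilon}{2k}$--$\frac{\varepsilon}{2k}$ split to get $d(x_{n},y)<rk+\varepsilon$. The only (cosmetic) difference is that the paper extracts the witness as a term $y_{p}$ of a sequence in $LIM^{r}x_{n}$ converging to $y$, whereas you take it directly from the ball-basis formulation of ``limit point''; both rest on the paper's unproved assertion that the open balls form a basis for $\tau(d)$.
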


\begin{proof}
    We consider the case when $LIM^rx_{n} \neq \phi$. Let $y \in (LIM^rx_{n})^{'}$. Then, let $\{y_n \}$ be a sequence in $LIM^rx_{n}$ converging to $y$. We will show that $ y \in LIM^{rk}x_{n}$.
    Let $\varepsilon>0$ be arbitrary positive real number and $\varepsilon_1 = \frac{\varepsilon}{2k}$.
    Now, we can find a $N \in \mathbb N$ such that $ d(y_n,y)<\varepsilon_1 $ and $ d(x_n,y_p)< r+\varepsilon_1 $ holds for every $n \geq N$, where $p$ is a fixed natural number greater than $N$. 
    Now, we can write 
    \begin{equation*}
        \begin{split}
    d(x_n,y) & \leq \alpha(x_n,y_p) d(x_n,y_p) + \alpha(y_p,y) d(y_p,y) \\
             & < k.(r+\varepsilon_1) + k.\varepsilon_1 \\
             & =rk + 2k.\varepsilon_1 \\
             & =rk + \varepsilon
        \end{split}
    \end{equation*}
    Hence $ y \in LIM^{rk}x_{n}$.
    So, $(LIM^rx_{n})^{'} \subset LIM^{rk}x_{n}$.
\end{proof}

\begin{nte}
   If $\alpha(x,y)=1$, $\forall x,y \in X$, then by above theorem, $(LIM^rx_{n})^{'} \subset LIM^{r}x_{n}$ which imply that $LIM^{r}x_{n}$ is a closed set. It is also seen in \cite{PHU} that $LIM^{r}x_{n}$ is closed. 
\end{nte}

In $(X, d)$ a sequence $\{ x_{n}\}$ is said to be bounded if and only if there exists a $B(>0)\in \mathbb{R}$ such that $d(x_{n}, x_{m}) < B$ for all $m,n \in \mathbb{N}$. The following theorem is a generalization of the classical property of a sequence that a convergent sequence must be bounded. 

\begin{thm}
    If $sup$ $\alpha(x,y)$ exists and equal to $k$, then every $r$-convergent sequence in controlled metric type spaces $(X,d)$ is bounded. 
\end{thm}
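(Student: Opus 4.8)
The plan is to combine the uniform bound $k$ on the control function with the defining inequality of $r$-convergence to control the tail of the sequence, and then absorb the finitely many initial terms into a single constant. Since $\{x_n\}$ is assumed $r$-convergent, the set $LIM^{r}x_{n}$ is nonempty, so I may fix some $x$ with $x_{n}\stackrel{r}{\longrightarrow}x$. Taking $\varepsilon=1$ in the definition of rough convergence, there is $n_{0}\in\mathbb{N}$ with $d(x_{n},x)<r+1$ for all $n\ge n_{0}$.

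First I would estimate the tail. For any $m,n\ge n_{0}$, the $b$-triangle inequality $(d3)$ together with $\alpha(x,y)\le k$ for all $x,y\in X$ gives
\[
d(x_{n},x_{m})\le \alpha(x_{n},x)d(x_{n},x)+\alpha(x,x_{m})d(x,x_{m})<k(r+1)+k(r+1)=2k(r+1).
\]
Next I would handle the indices below $n_{0}$. Put $M_{1}=\max\{d(x_{i},x_{j}):1\le i,j\le n_{0}-1\}$ and $M_{2}=\max\{d(x_{i},x):1\le i\le n_{0}-1\}$; both are finite, being maxima of finite sets. For $i,j<n_{0}$ we have $d(x_{i},x_{j})\le M_{1}$, while for $i<n_{0}\le j$, again by $(d3)$ and $\alpha\le k$,
\[
d(x_{i},x_{j})\le \alpha(x_{i},x)d(x_{i},x)+\alpha(x,x_{j})d(x,x_{j})<kM_{2}+k(r+1).
\]
Finally, setting $B=\max\{2k(r+1),\,M_{1},\,kM_{2}+k(r+1)\}+1$, one gets $d(x_{n},x_{m})<B$ for all $m,n\in\mathbb{N}$, which is exactly the definition of boundedness used just before the statement.

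The argument is essentially routine, so there is no serious obstacle; the only points that need a little care are: (i) invoking the hypothesis of $r$-convergence to guarantee that $LIM^{r}x_{n}\ne\phi$, so that a legitimate center $x$ can be chosen; (ii) using the existence of $\sup\alpha$ in an essential way — it is precisely what lets us replace the control function by the single constant $k$ \emph{uniformly} in the triangle inequality, without which the tail estimate $2k(r+1)$ would not be available; and (iii) the strictness of the inequality required by the definition of bounded sequence, which is why $B$ is padded by $1$ (equivalently, one could pass to weak inequalities throughout and then enlarge the constant).
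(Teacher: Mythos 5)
Your proof is correct and follows essentially the same strategy as the paper: split the index pairs according to whether they fall before or after the threshold, bound the finitely many initial distances by a maximum, and control the tail via the triangle inequality with $\alpha\le k$. The only difference is that you route the triangle inequality directly through the rough limit $x$, whereas the paper routes it through the fixed term $x_p$ (thereby chaining two applications of $(d3)$ and picking up extra factors of $k$, e.g.\ $4k^{2}(r+\varepsilon)$ instead of your $2k(r+1)$); your version is marginally cleaner but the argument is the same.
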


\begin{proof}
    Let $\{x_n \}$ be a sequence in a controlled metric type spaces $(X,d)$ $r$-convergent to $x$.
    We will show that $\{x_n \}$ is bounded in $X$.
    Now, for arbitrary $\varepsilon>0$, we can find a natural number $p$ such that $ d(x_n,x)< r+\varepsilon, \ \forall n \geq p $.
    Now, we consider $ M= max_{1 \leq i, j \leq p} \{ d(x_i,x_j)\}$.
    Now, we consider three cases \\
    \textbf{Case(i):} Let $i \leq p$ and $ j \geq p$, then 
    \begin{equation*}
        \begin{split}
    d(x_j,x_p) & \leq \alpha(x_j,x)d(x_j,x) + \alpha(x,x_p) d(x,x_p)   \\
               & < k.(r+\varepsilon) + k.(r+\varepsilon) \\
               & = 2k(r+\varepsilon)
        \end{split}
    \end{equation*}
Also, \begin{equation*}
    \begin{split}
d(x_i,x_j) & \leq \alpha(x_i,x_p)d(x_i,x_p) + \alpha(x_p,x_j) d(x_p,x_j)   \\  
           &  < k.M + k.2k(r+\varepsilon) \\
           & = kM + 2k^2(r+\varepsilon).
    \end{split}
\end{equation*}
\textbf{Case(ii):} Let $i \geq p$ and $j \leq p$, then interchanging the role of $i$ and $j$ in case(i), we can see
$d(x_i,x_j) \leq kM + 2k^2(r+\varepsilon)$. \\
\textbf{Case(iii):} Now, let $i \geq p$ and $j \geq p$, then $d(x_i,x_p) \leq 2k(r+\varepsilon)$ and $d(x_j,x_p) \leq 2k(r+\varepsilon)$.
Also, \begin{equation*}
    \begin{split}
d(x_i,x_j) & \leq \alpha(x_i,x_p)d(x_i,x_p) + \alpha(x_p,x_j) d(x_p,x_j)   \\
           & < k.2k(r+\varepsilon) + k.2k(r+\varepsilon) \\
           & = 4k^2 (r+\varepsilon).
    \end{split}
\end{equation*}
If $ B = max \{M, kM + 2k^2(r+\varepsilon), 4k^2 (r+\varepsilon) \}$, then $ d(x_i,x_j)<B, \ \forall i,j \in \mathbb N$.
Therefore, $\{x_n \}$ is bounded in $X$.
\end{proof}

In $\mathbb R$, we know that a bounded sequence may not be convergent. But the following result shows that a bounded sequence is always rough convergent.

\begin{thm}
    If $\sup \alpha(x_i,y_j)$ exists and equal to $k$, then a bounded sequence in controlled metric type spaces $(X,d)$ is always $r$-convergent for some degree of roughness $r=2kB$, where $B(>0) \in \mathbb R$. 
\end{thm}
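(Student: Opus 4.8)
The plan is to exhibit an explicit $r$-limit point with $r=2kB$ — namely a fixed term of the sequence — and thereby conclude rough convergence straight from the definition. So let $\{x_n\}$ be bounded, meaning there is some $B>0$ with $d(x_n,x_m)<B$ for all $m,n\in\mathbb N$, and set $x:=x_1$ (any fixed term would serve equally well). I claim $x\in LIM^{2kB}x_n$, which is exactly the assertion that $\{x_n\}$ is $r$-convergent with degree of roughness $r=2kB$.

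To check the claim, fix an arbitrary $\varepsilon>0$ and pick any auxiliary index $p\in\mathbb N$. For every $n\in\mathbb N$, applying the controlled triangle inequality $(d3)$ through $x_p$ and then using boundedness twice gives
\begin{equation*}
\begin{split}
d(x_n,x) &\le \alpha(x_n,x_p)\,d(x_n,x_p)+\alpha(x_p,x)\,d(x_p,x)\\
         &< k\cdot B + k\cdot B \;=\; 2kB \;<\; 2kB+\varepsilon .
\end{split}
\end{equation*}
Thus $d(x_n,x)<2kB+\varepsilon$ for every $n$ (so one may take $n_0=1$), and since $\varepsilon>0$ was arbitrary, $x\in LIM^{2kB}x_n$; hence $\{x_n\}$ is $r$-convergent with $r=2kB$.

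I do not expect any genuine obstacle. The only point that needs care is the choice of the prospective limit: it must be a point the sequence already meets (a term $x_{m_0}$), since an arbitrary point of $X$ can sit arbitrarily far from every $x_n$. The factor $2k$ is simply the cost of routing the estimate through the intermediate term $x_p$ via $(d3)$; in fact the bound $d(x_n,x_1)<B$ is available directly, so any $r\ge B$ would already work, and since $\alpha\ge 1$ forces $k\ge 1$ the stated value $r=2kB$ follows a fortiori.
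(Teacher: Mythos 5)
Your proposal is correct and follows essentially the same route as the paper: both take a fixed term of the sequence as the rough limit point and bound $d(x_n,\cdot)$ by $2kB$ via the controlled triangle inequality through another term. Your closing observation that the boundedness hypothesis already gives $d(x_n,x_1)<B$ directly, so that roughness degree $B$ (indeed any $r\ge B$) suffices without invoking $(d3)$ at all, is a valid sharpening that the paper does not make.
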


\begin{proof}
   Let $\{x_n \}$ be a bounded sequence in controlled metric type spaces $(X,d)$. 
   So, there exists real number $B$ such that $ sup \{ d(x_n,x_m)\}< B, \ \forall n,m \in \mathbb N$.
   Let $p \in \mathbb N$ be fixed. Then for all $n \in \mathbb N$, we can write 
   \begin{equation*}
       d(x_n,x_p)  \leq \alpha(x_n,x_m)d(x_n,x_m) + \alpha(x_m,x_p) d(x_m,x_p)   
             < k.B+k.B 
             = 2kB
            < 2kB + \varepsilon, \ \forall n. 
  \end{equation*}
Hence $\{x_n \}$ rough converges to $x_p$ for degree of roughness $r=2kB$. 
\end{proof}

\begin{thm}
    Let $\{ x_{{n}_{i}}\}$ be a subsequence of $\{ x_{n}\}$, then  $LIM^{r} x_{n} \subseteq  LIM^{r} x_{{n}_{i}}$.
\end{thm}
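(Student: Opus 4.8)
The plan is to unwind the definition of the $r$-limit set directly, since no use of the $b$-triangle inequality or of the control function $\alpha$ is needed for this statement. I would start by taking an arbitrary point $x \in LIM^{r} x_{n}$ and aim to show $x \in LIM^{r} x_{{n}_{i}}$; that is, I must produce, for each $\varepsilon > 0$, an index $i_{0}$ such that $d(x_{{n}_{i}}, x) < r + \varepsilon$ for all $i \geq i_{0}$.

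First I would fix $\varepsilon > 0$ and use $x \in LIM^{r} x_{n}$ to obtain $n_{0} \in \mathbb{N}$ with $d(x_{n}, x) < r + \varepsilon$ for all $n \geq n_{0}$. Then I would invoke the elementary property of subsequences that the index map $i \mapsto n_{i}$ is strictly increasing, so that $n_{i} \geq i$ for every $i$; in particular there is $i_{0} \in \mathbb{N}$ with $n_{i} \geq n_{0}$ for all $i \geq i_{0}$. Chaining the two observations, for every $i \geq i_{0}$ we have $n_{i} \geq n_{0}$ and hence $d(x_{{n}_{i}}, x) < r + \varepsilon$. Since $\varepsilon > 0$ was arbitrary, $x_{{n}_{i}} \stackrel{r}{\longrightarrow} x$, i.e. $x \in LIM^{r} x_{{n}_{i}}$; and since $x$ was an arbitrary element of $LIM^{r} x_{n}$, the inclusion $LIM^{r} x_{n} \subseteq LIM^{r} x_{{n}_{i}}$ follows.

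I do not expect any real obstacle here: the only point requiring care is the bookkeeping with subsequence indices, namely that $n_{i} \to \infty$, which is precisely what transfers the ``eventually'' condition in the definition of rough convergence from the full sequence to the subsequence. It may be worth remarking afterwards that the inclusion can be proper, since a subsequence may possess $r$-limit points that the original sequence does not.
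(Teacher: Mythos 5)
Your proposal is correct and follows essentially the same route as the paper's proof: unwind the definition of $LIM^{r}x_{n}$, use that the subsequence indices $n_{i}$ eventually exceed the threshold index for the full sequence, and transfer the inequality $d(x_{n},x)<r+\varepsilon$ to the subsequence. The only difference is cosmetic — you justify the existence of $i_{0}$ via $n_{i}\geq i$, while the paper simply picks $j$ with $n_{j}>m$ — so nothing further is needed.
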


\begin{proof}
 For arbitrary $\varepsilon>0$ and let $x \in LIM^{r} x_{n}$.
 Then there exists $m \in \mathbb N$ such that $d(x_n,x)<r+\varepsilon$, $\forall n \geq m$.
 Let $n_j>m$ for some $j \in \mathbb N$ and $n_i>m$ for $i \geq j$.
 Therefore $d(x_{{n}_{i}},x)<r+\varepsilon$, for for $i \geq j$.
 So, $x \in LIM^{r} x_{{n}_{i}} $.
 Hence $LIM^{r} x_{n} \subseteq  LIM^{r} x_{{n}_{i}}$.
\end{proof}

\begin{thm}
    Let $\sup \alpha(x,y)$ exists and equal to $k$. Also, let $\{a_n \}$ and $\{b_n \}$ be two sequences in controlled metric type spaces $(X,d)$ with the property that $d(a_i,b_i) \leq \frac{r}{k}, \ \forall i \geq p_1 $, for some natural number $p_1$ and $r>0$. Then $\{a_n \}$ converges to $\xi \in X$ imply that $\{b_n \}$ is converges to $\xi$.   
\end{thm}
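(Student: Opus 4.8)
The plan is to prove the ordinary convergence $d(b_n,\xi)\to 0$ straight from the definition, by routing the distance $d(b_n,\xi)$ through the companion point $a_n$ via the controlled triangle inequality $(d3)$ and then squeezing the resulting estimate with the hypothesis $a_n\to\xi$. First I would apply $(d3)$ with $a_n$ as the intermediate point, use $(d2)$ to rewrite $d(b_n,a_n)=d(a_n,b_n)$, and invoke the uniform bound $\alpha\le k$ to record, for every $n\ge p_1$,
\begin{equation*}
d(b_n,\xi)\le \alpha(b_n,a_n)\,d(b_n,a_n)+\alpha(a_n,\xi)\,d(a_n,\xi)\le k\,d(b_n,a_n)+k\,d(a_n,\xi).
\end{equation*}
The strategy is then to push the right-hand side below an arbitrarily prescribed $\varepsilon>0$.

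For the $a_n$-to-$\xi$ summand I would fix $\varepsilon>0$, invoke $a_n\to\xi$ to produce $N$ with $d(a_n,\xi)<\varepsilon/(2k)$ for all $n\ge N$, so that $k\,d(a_n,\xi)<\varepsilon/2$; this half is annihilated purely by the convergence of $\{a_n\}$. The $b_n$-to-$a_n$ summand is the one that must carry the companion hypothesis: from $d(a_i,b_i)\le r/k$ I get $k\,d(b_n,a_n)\le r$ for all $n\ge p_1$, and combining the two halves for $n\ge\max\{N,p_1\}$ yields the estimate $d(b_n,\xi)<r+\varepsilon/2$.

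The decisive step, and the one I expect to be the main obstacle, is upgrading this estimate to the asserted limit $d(b_n,\xi)\to 0$. The convergence of $\{a_n\}$ only disposes of the $a_n$-to-$\xi$ part, whereas the $b_n$-to-$a_n$ part is controlled solely by the constant $r/k$, so the naive bound stalls at the fixed level $r$ instead of descending to $0$. To close the argument as stated one must therefore show that the separation term $k\,d(b_n,a_n)$ itself tends to $0$: the very same use of $(d3)$ shows that if $d(a_n,b_n)$ can be driven below $\varepsilon/(2k)$ for every $\varepsilon>0$---equivalently $d(a_n,b_n)\to 0$---then $k\,d(b_n,a_n)<\varepsilon/2$ as well, whence $d(b_n,\xi)<\varepsilon$ and $b_n\to\xi$. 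Pinning down how the hypothesis forces this vanishing of the separation, so that the standing contribution $r$ is actually removed rather than merely bounded, is the heart of the matter and the point at which I would scrutinise the role of the constant $r/k$ most carefully.
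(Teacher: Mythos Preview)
Your computation is exactly the paper's: apply $(d3)$ through $a_n$, bound $\alpha$ by $k$, use $d(a_n,b_n)\le r/k$ and $d(a_n,\xi)<\varepsilon/k$ (the paper takes $\varepsilon/k$ rather than your $\varepsilon/(2k)$, a cosmetic difference), and conclude $d(b_n,\xi)<r+\varepsilon$ for all large $n$. The paper stops precisely there with ``Hence the result follows.''

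The gap you perceive is not a gap in the argument but a misreading of the conclusion. Despite the loose wording ``$\{b_n\}$ is converges to $\xi$'', the intended assertion is \emph{rough} convergence of degree $r$, not ordinary convergence; this is why the fixed $r>0$ appears in the hypotheses at all, and why the paper's proof terminates at $d(b_n,\xi)<r+\varepsilon$. Your attempt to upgrade to $d(b_n,\xi)\to 0$ cannot succeed from the stated hypotheses alone---as you correctly observe, the separation term contributes a standing $r$ that only vanishes if $d(a_n,b_n)\to 0$, which is strictly stronger than $d(a_n,b_n)\le r/k$. So discard the final paragraph: the inequality $d(b_n,\xi)<r+\varepsilon$ for all $n\ge\max\{p_1,p_2\}$ is already the full conclusion.
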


\begin{proof}
   Let $\varepsilon>0$ be arbitrary and let $\{a_n \}$ converge to $\xi$. So, corresponding to this $\varepsilon>0$, we can find $p_2 \in \mathbb N$ such that $d(a_n,\xi)< \frac{\varepsilon}{k}, \ \forall n \geq p_2$.
   If we consider $ p=max \{ p_1,p_2 \}$, then for $n \geq p$ we have 
   \begin{equation*}
       d(b_n,\xi) \leq \alpha(b_n,a_n)d(b_n,a_n) + \alpha(a_n,\xi) d(a_n,\xi)
                   < k.\frac{r}{k} + k. \frac{\varepsilon}{k}
                   = r+\varepsilon, \ \forall n \geq p.
   \end{equation*}
Hence the result follows.
\end{proof}

\begin{thm}
    Let $\{x_n \}$ be a sequence in a controlled metric type space $(X,d)$ and $r$-convergent to $x$. Also, let $\sup \alpha(x,y)$ exists and equal to $k$. If $\{\xi_n \}$ is a convergent sequence in $LIM^r x_n$ converging to $\xi$ then $\{x_n \}$ is $rk$-convergent to $\xi$ in $X$. 
\end{thm}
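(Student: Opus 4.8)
The plan is to mimic almost verbatim the argument used for Theorem~3.13 (that $(LIM^r x_n)' \subset LIM^{rk}x_n$), since the present statement is essentially that inclusion applied to the particular limit point $\xi$ of the sequence $\{\xi_n\}$. First I would fix an arbitrary $\varepsilon>0$ and put $\varepsilon_1=\frac{\varepsilon}{2k}$. Using the convergence $\xi_n\to\xi$ in $(X,d)$, choose $N_1\in\mathbb N$ with $d(\xi_n,\xi)<\varepsilon_1$ for all $n\geq N_1$, and then fix one index $p\geq N_1$; thus $d(\xi_p,\xi)<\varepsilon_1$ and, crucially, $\xi_p\in LIM^r x_n$.

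Next, because $\xi_p\in LIM^r x_n$ we have $x_n\stackrel{r}{\longrightarrow}\xi_p$, so there is $N_2\in\mathbb N$ with $d(x_n,\xi_p)<r+\varepsilon_1$ for all $n\geq N_2$. Then for every $n\geq N_2$ I would apply $(d3)$ with the intermediate point $\xi_p$, together with the bound $\alpha\leq k$:
\[
d(x_n,\xi)\leq \alpha(x_n,\xi_p)d(x_n,\xi_p)+\alpha(\xi_p,\xi)d(\xi_p,\xi) < k(r+\varepsilon_1)+k\varepsilon_1 = rk+2k\varepsilon_1 = rk+\varepsilon.
\]
Since $\varepsilon>0$ was arbitrary, this shows $d(x_n,\xi)<rk+\varepsilon$ holds eventually, i.e. $\{x_n\}$ is $rk$-convergent to $\xi$.

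There is no real obstacle here; the only step requiring a little care is the order of quantifiers. One must first invoke $\xi_n\to\xi$ to pin down a single admissible index $p$, and only afterwards use the definition of $\xi_p\in LIM^r x_n$ for that now-fixed $p$ to control $d(x_n,\xi_p)$ for large $n$; attempting to treat all indices of $\{\xi_n\}$ simultaneously would fail, since different $\xi_p$ give different thresholds $N_2$. It is worth remarking that the hypothesis that $\{x_n\}$ is itself $r$-convergent to some $x$ is not actually used in the proof — nonemptiness of $LIM^r x_n$ is already guaranteed by the existence of the sequence $\{\xi_n\}$ inside it — so the conclusion in fact holds under the weaker assumptions that $\sup\alpha(x,y)=k$ and $\{\xi_n\}\subset LIM^r x_n$ with $\xi_n\to\xi$.
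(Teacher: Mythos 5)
Your proof is correct and follows essentially the same route as the paper: pass through one member of $\{\xi_n\}$ that is $\varepsilon_1$-close to $\xi$ and apply $(d3)$ with the bound $\alpha\le k$. In fact your version is slightly more careful than the paper's own, which fixes $p=\max\{p_1,p_2\}$, picks $\xi_m$ with $m>p$, and then asserts $d(x_n,\xi_m)<r+\varepsilon_1$ for all $n\ge p$ even though the threshold certifying $\xi_m\in LIM^r x_n$ can only be obtained after $\xi_m$ is chosen (the hypothesis of $r$-convergence to $x$, which the paper invokes, does not bound $d(x_n,\xi_m)$); your ordering of quantifiers — fix $\xi_p$ first, then choose $N_2$ for that $\xi_p$ — repairs this, and your remark that the $r$-convergence of $\{x_n\}$ to $x$ is never actually needed is also accurate.
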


\begin{proof}
    Let $\varepsilon>0$ be arbitrary and let $\varepsilon_1=\frac{\varepsilon}{2k}$.
    Since, $\{\xi_n \}$ converges to $\xi$, so there exists a natural number $p_1$ such that $d(\xi_n,\xi)<\varepsilon_1, \ \forall n \geq p_1$.
    Also, since $\{x_n \}$ is $r$-convergent to $x$, there exists a natural number $p_2$ such that $d(x_n,x)< r+\varepsilon_1, \ \forall n \geq p_2$.
    Let $p=max \{p_1,p_2 \}$ and consider a member $\xi_m$ of $\{\xi_n \}$ where $m >p$.
    Now, for all $n \geq p$, we have 
    \begin{equation*}
        d(x_n,\xi) \leq \alpha(x_n,\xi_m)d(x_n,\xi_m) + \alpha(\xi_m,\xi)d(\xi_m,\xi) 
                    < k.(r+\varepsilon_1) + k.\varepsilon_1
                    = rk + 2k.\varepsilon_1
                    =rk + \varepsilon
    \end{equation*}
Therefore, $d(x_n,\xi)< rk + \varepsilon$ holds for all $n \geq p$.
Hence the result follows.
\end{proof}

Let $\{x_{n}\}$ be a sequence in controlled metric type spaces $(X,d)$. Then $c \in X$ is said to be a cluster point of $\{x_{n}\}$, if for every $\varepsilon >0$ and every natural number $p$ there exists a natural number $m>p$ such that $d( x_{m},c)< \varepsilon$ holds.

\begin {thm}
Let $\{x_{n}\}$ be a $r$-convergent sequence in controlled metric type spaces $(X,d)$ and $\sup \alpha(x,y)$ exists and equal to $k$. Then for any cluster point $c$ of $\{x_{n}\}$, $LIM^{r}x_{n}  \subset \overline{B}(c,rk)$.
\end{thm}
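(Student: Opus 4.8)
The plan is to fix an arbitrary $z \in LIM^{r}x_{n}$ (if this set is empty there is nothing to prove) and show directly that $d(c,z)\le rk$ by an $\varepsilon$-argument, in the same spirit as the earlier proof that $LIM^{r}x_{n}\subset\overline{B}(x,rk)$ for a convergent sequence. Given an arbitrary $\varepsilon>0$, I would set $\varepsilon_{1}=\frac{\varepsilon}{2k}$ and then locate a single index along the sequence at which two good estimates hold simultaneously.

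First, since $z$ is an $r$-limit point of $\{x_{n}\}$, there is $N\in\mathbb{N}$ with $d(x_{n},z)<r+\varepsilon_{1}$ for all $n\ge N$. Next I would invoke the definition of a cluster point with the threshold $p=N$ and tolerance $\varepsilon_{1}$: there exists an index $m>N$ such that $d(x_{m},c)<\varepsilon_{1}$. The only point requiring a little care is this simultaneous choice — the same $m$ must be large enough for the $r$-limit estimate to apply and must also witness the cluster behaviour of $c$; choosing $m>N$ achieves both. Applying $(d3)$, the symmetry $(d2)$, and the bound $\alpha\le k$, for this $m$ we get
\[
d(c,z)\le \alpha(c,x_{m})d(c,x_{m})+\alpha(x_{m},z)d(x_{m},z) < k\varepsilon_{1}+k(r+\varepsilon_{1}) = rk+2k\varepsilon_{1} = rk+\varepsilon .
\]
Since $\varepsilon>0$ was arbitrary, $d(c,z)\le rk$, i.e. $z\in\overline{B}(c,rk)$, whence $LIM^{r}x_{n}\subset\overline{B}(c,rk)$.

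I do not expect a genuine obstacle here: the hypothesis that $\{x_{n}\}$ is $r$-convergent is used only to make the statement non-vacuous, and the existence of $k=\sup\alpha(x,y)$ is exactly what converts the controlled triangle inequality into a usable numerical bound. The one conceptual distinction worth flagging in the write-up is that, in contrast with the earlier results comparing $LIM^{r}x_{n}$ with balls centred at an actual (rough or ordinary) limit, here $c$ is only a cluster point, so no tail estimate for $d(x_{n},c)$ is available; this is precisely why the cluster-point definition — infinitely many indices with $d(x_{m},c)$ small — is the appropriate tool, allowing us to pick $m$ beyond $N$.
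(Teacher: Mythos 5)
Your proposal is correct and follows essentially the same route as the paper: fix $z\in LIM^{r}x_{n}$, take $\varepsilon_{1}=\frac{\varepsilon}{2k}$, use the $r$-limit tail estimate to get $N$, invoke the cluster-point property to find $m>N$ with $d(x_{m},c)<\varepsilon_{1}$, and combine via $(d3)$ with $\alpha\le k$ to obtain $d(c,z)<rk+\varepsilon$, hence $d(c,z)\le rk$. The paper's proof is the same argument with identical constants.
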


\begin{proof}
 Let $\varepsilon>0$ be arbitrary and let $\varepsilon_1=\frac{\varepsilon}{2k}$.
 Let $x \in LIM^{r}x_{n}$. Then we can find a natural number $p$ such that $d(x_n,x)< r+\varepsilon_1, \ \forall n \geq p$.
 Also, since $c$ is a cluster point of $\{x_{n}\}$, there exists a natural number $m>p$ such that $d( x_{m},c)< \varepsilon_1$ holds.
 Now, for this natural number $m$, we can write 
 \begin{equation*}
     d(c,x) \leq \alpha(c,x_m)d(c,x_m) + \alpha(x_m,x)d(x_m,x) 
             < k.\varepsilon_1 + k. (r+\varepsilon_1)
             = k.\frac{\varepsilon}{2k} + k.(r+ \frac{\varepsilon}{2k})
             = r.k + \varepsilon
 \end{equation*}
Hence $ d(c,x)< rk+\varepsilon$.
Since $\varepsilon$ is arbitrary, $d(c,x) \leq rk$ i.e. $d(x,c) \leq rk$.
Therefore, $x \in \overline{B}(c,rk) $ and the result follows. 
\end{proof}

\subsection*{Acknowledgements}
The first author is thankful to The University of Burdwan for the grant of Senior Research Fellowship (State Funded) during the preparation of this paper. Both authors are also thankful to DST, Govt of India for providing FIST project to the deperment of Mathematics, B.U.


\begin{thebibliography}{99}
		
\bibitem{AYTER1}
S. Aytar, 
\textit{Rough statistical convergence,} 
Numer. Funct. Anal. Optim. {29(3-4)} (2008), 291-303.

\bibitem{AYTER2}
S. Aytar, 
\textit{The rough limit set and the core of a real Sequence,} 
Numer. Funct. Anal. Optim. {29(3-4)} (2008), 283-290.

\bibitem{BAK}
Bakhtin, I. A., 
\textit{The contraction mapping principle in almost metric spaces,}
Funct. Anal., 30, Unianowsk, Gos. Ped. Inst., (1989), 26-37.



\bibitem{AKB}
A. K. Banerjee and A. Dey, 
\textit{Metric Spaces and Complex Analysis,}  
New Age International (P) Limited, Publication, 
ISBN-10: 81-224-2260-8, ISBN-13: 978-81-224-2260-3.

\bibitem{RMROUGH}
A. K. Banerjee, R. Mondal,
\textit{Rough convergence of sequences in a cone metric space,} J. Anal. 27(3-4) (2019), 1179–1188.

\bibitem{SUK2}
A. K. Banerjee and S. Khatun, 
\textit{Rough convergence of sequences in a partial metric space,}
arXiv: 2211.03463, 2022.

\bibitem{AKB2}
A. K. Banerjee and A. Paul,
\textit{ On rough continuity and rough I-continuity of real functions},
arXiv: 2207.00542, 2022.

\bibitem{ZER}
S. Czerwik
\textit{Contraction mappings in b-metric spaces,}

Acta Math. Univ. Ostrav., 
{01(1)}(1993), 5-11.

\bibitem{DR}
S. Debnath and D. Rakshit, 
\textit{Rough convergence in metric spaces}, 
 In: Dang, P., Ku, M., Qian, T., Rodino, L. (eds) New Trends in Analysis and Interdisciplinary Applications. Trends in Mathematics(). Birkhäuser, Cham.$https://doi.org/10.1007/978-3-319-48812-7_57$. 
 
 \bibitem{NH}
 N. Hossain and A. K. Banerjee,
 \textit{Rough I-convergence in intuitionistie fuzzy normed space},
 Bulletin of Mathematical Analysis and Application,
 14(4) (20220), 1-10.
 
\bibitem{KAM}
T. Kamran, M. Samreen and Q. UL Ain,  
\textit{A Generalization of b-metric space and some fixed point theorems.}
Mathematics, 5(2)(2017), 1–7.

\bibitem{MLA}
N. Mlaiki, H. Aydi, N. Souayah and T. Abdeljawad, \textit{Controlled metric type spaces and the related
contraction principle,} 
Mathematics, {6(10)} (2018), 194-201.

\bibitem{PMROUGH1}
P. Malik, and M. Maity, 
\textit{On rough convergence of double sequence in normed linear spaces,}
Bull. Allahabad Math. Soc. {28(1)} (2013), 89-99.

\bibitem{PMROUGH2}
P. Malik, and M. Maity, 
\textit{On rough statistical convergence of double sequences in normed linear spaces,} 
Afr. Mat. 27(2016), 141-148.

\bibitem{SUK}
R. Mondal and S. Khatun, 
\textit{Rough convergence of sequences in an $S$ metric space,}
arXiv: 2204.04696, 2022.

\bibitem{PAL}
S. K. Pal, D. Chandra and S. Dutta,
\textit{Rough ideal convergence},
Hacet. J. Math. Stat. 42(6) (2013), 633–640.

\bibitem{PHU}
H. X. Phu, 
\textit{Rough convergence in normed linear spaces,} 
Numer. Funct. Anal. Optim.
22(1-2) (2001), 199-222.

\bibitem{PHU1}
H. X. Phu, 
\textit{Rough convergence in infinite dimensional normed spaces,} 
Numer. Funct. Anal. Optim. 24(2-3) (2003), 285-301.




\end{thebibliography}
\end{document}